\newcommand{\cfgcell}[3]{%
\begin{minipage}[t]{0.237\textwidth}\centering
\includegraphics[width=\linewidth]{#1}\\[-30pt]
\scriptsize (#2)\ \ #3
\end{minipage}}
\newtheorem{theorem}{Theorem}
\newtheorem{definition}[theorem]{Definition}
\newtheorem{remark}[theorem]{Remark}
\title{On planar sections of the dodecahedron}
\author{Andreas Thom}
\address{Andreas Thom, TU Dresden, 01062 Dresden, Germany}
\email{andreas.thom@tu-dresden.de}
\begin{document}
\maketitle
\begin{abstract}
In the analysis of three-dimensional biological microstructures such as organoids, microscopy frequently yields two-dimensional optical sections without access to their orientation. Motivated by the question of whether such random planar sections determine the underlying three-dimensional structure, we investigate a discrete analogue in which the ambient structure is the vertex set of a Platonic solid and the observed data are congruence classes of planar intersections. For the regular dodecahedron with vertex set $V$, we define the planar statistic of a subset $X\subseteq V$ of vertices as the distribution of isometry types of inclusions $\Pi\cap X \subseteq \Pi \cap V \subseteq V$, and ask whether this statistic determines $X \subset V$ up to isometry. We show that this is not the case: there exist two non-isometric $7$-element subsets with identical planar statistics. 

As a consequence, there exist two polytopes in $\mathbb R^3$, whose distribution of isometry classes of two-dimensional intersections is identical, while the polytopes are not themselves isometric. This result is an analogue of classical non-uniqueness phenomena in geometric tomography.\end{abstract}

\section{Introduction}
High-resolution imaging techniques in biological systems frequently produce planar sections of inherently three-dimensional structures. A prominent example appears in organoid research, where multicellular aggregates are cultivated in three-dimensional media, yet standard microscopy visualizes them in two-dimensional optical slices, \cite{KesharaKimGrapinBotton22, KokEtAl25, OngEtAl25}. In many experimental pipelines, one does not obtain a calibrated full-volume scan, but rather a large collection of optical sections or histological slices taken at (approximately) random offsets and orientations.


From a mathematical viewpoint, such data amount to a distribution of planar configurations, observed only up to planar congruence. This leads to a basic inverse question: does the distribution of random planar sections identify the underlying three-dimensional structure uniquely? The problem is conceptually different from classical geometric tomography, where the position of the slicing plane is part of the measured data; here we treat the slice as unlabelled, and only the isometry class of the resulting planar configuration is retained.

The classical entry point in stereology is Wicksell's corpuscle problem for spheres \cite{Wicksell25}. For a single sphere of radius $r$, a plane at offset $u\in[-r,r]$ cuts a disk of radius
\[
\rho = \sqrt{r^2-u^2}\in[0,r].
\]
If the plane offset is sampled uniformly, the induced (non-normalized) section-radius measure on $(0,r)$ is
\[
\mu_r(d\rho)=\frac{2\rho}{\sqrt{r^2-\rho^2}}\,d\rho,\qquad \mu_r([0,r])=2r.
\]
Passing from a single sphere to a population, let $f(r)$ be the density of sphere radii in $\mathbb{R}^3$ and let $g(\rho)$ be the density of observed section radii. Then $g$ is related to $f$ by an Abel-type integral transform \cite{Wicksell25, JakemanAnderssen75}:
\[
g(\rho)=2\rho\int_{r=\rho}^{\infty}\frac{f(r)}{\sqrt{r^2-\rho^2}}\,dr,\qquad \rho>0,
\]
and one can invert this relation and recover $f$ from $g$ via
\[
f(r)= -\frac{1}{\pi}\frac{d}{dr}\int_{\rho=r}^{\infty}\frac{g(\rho)}{\sqrt{\rho^2-r^2}}\,d\rho,\qquad r>0.
\]
In particular, in the spherical case the radius distribution is uniquely determined by the section statistics.

Beyond spheres, identifiability becomes subtle. For particles of the form $sK$ (random orientation and scale $s$), section functionals such as area lead to one-dimensional integral equations, and the scale distribution can often be identified \cite{Santalo55, vanDerJagtJongbloedVittorietti2024}. Related stereological methods and estimators also exist for polyhedral shapes such as cubes \cite{OhserMucklich95, OhserNippe97}. On the other hand, there are genuine non-identifiability results: Cruz-Orive, solving a conjecture of Moran, exhibited distinct distributions of ellipsoids with identical induced distributions of unoriented planar sections \cite{CruzOrive76, Moran72}.

The aim of this work is to study such identifiability questions at first in a highly controlled model, motivated by random-slice data in microscopy. We fix a finite vertex set $V$ of a Platonic solid in $\mathbb{R}^3$ and regard subsets $X\subset V$ as discrete proxies for three-dimensional microstructures. Each vertex-plane $\Pi$ determines an inclusion $\Pi\cap X \subseteq \Pi\cap V \subseteq V$, and the only observable information is the Euclidean congruence type of this inclusion. Aggregating over all vertex-planes, we obtain a distribution of planar congruence classes; we call it the \emph{planar statistic} of $X$, see Definition \ref{def:planar-statistics}.

Our guiding question is whether the planar statistic determines $X$ up to isometry in $\mathbb{R}^3$. In analogy with non-uniqueness phenomena for intercept and chord-length distributions in the plane \cite{MallowsClark70}, one expects that low-dimensional section statistics can fail to be injective. Our main theorem confirms this expectation in the case of the regular dodecahedron: we exhibit two non-congruent $7$-element subsets $S,T\subset V$ with identical planar statistics, see Theorem \ref{thm:nonuniq}. In particular, even in an idealized, noise-free regime with isotropic plane sampling, uniformly sampled slices need not suffice to distinguish the underlying structure.

As a geometric consequence, we construct two polytopes in $\mathbb{R}^3$ whose distributions of isometry classes of unoriented planar sections coincide, although the bodies themselves are not isometric; see Theorem \ref{thm:convex-bodies}. So, while the result of Cruz-Orive \cite{CruzOrive76} shows non-identifiability for distributions of convex sets, our construction yields non-identifiability already for single polytopes.
This result is an analogue of classical non-uniqueness phenomena in geometric tomography \cite{Gardner06, Santalo04}. 

\medskip

Let us explain our approach and the relation to previous work in more detail.
Our strategy is closely analogous in spirit to the classical example of Mallows and
Clark \cite{MallowsClark70,MallowsClark71}, who exhibited two convex dodecagons in the
plane with identical chord–length distributions but which are not congruent. Their
construction naturally splits into a combinatorial step and a geometric encoding step.

In the combinatorial step, one starts from the regular octagon and considers its eight
sides or, equivalently, the eight isosceles sectors with apex at the centre. Mallows and
Clark choose two subsets \(X\) and \(Y\) of four sides each with the property that the
multiset of unordered pairs of sides in \(X\) is, up to Euclidean isometry of the octagon,
the same as the multiset of unordered pairs of sides in \(Y\). In modern language, \(X\)
and \(Y\) form a non-congruent homometric pair in the cyclic graph \(C_{8}\) that records
side adjacency. At this level the construction is purely discrete: one arranges that the
“incidence statistics’’ of \(X\) and \(Y\) inside the octagon agree, even though the subsets
themselves are not related by a symmetry.

The geometric step then promotes this combinatorial configuration to a pair of convex
subsets of the plane. To each side in \(X\), respectively in \(Y\), one attaches a small congruent
isosceles triangular hat pointing outward, thereby turning the octagon into two convex
dodecagons \(K_{X}\) and \(K_{Y}\). The contribution of a cap to the
chord–length distribution depends only on which side it sits on, so the matching of
unordered pairs of sides in \(X\) and \(Y\) forces the chord–length distributions of
\(K_{X}\) and \(K_{Y}\) to coincide, although the polygons themselves are not congruent.

The present dodecahedral construction follows the same two-step pattern in a
three-dimensional setting. Here the regular octagon is replaced by the regular
dodecahedron \(D\) with vertex set \(V\), and the role of chords is taken over by planar
sections through vertex–planes. The first, combinatorial step is the search for two
subsets \(S,T \subset V\) with identical planar statistics in the sense of Definition \ref{def:planar-statistics}.
Equivalently, for every vertex–plane \(\Pi\) the inclusion \(\Pi \cap S \subset \subset \Pi \cap V \subset V\) has, up to
isometry of \(D\), the same type and multiplicity distribution as \(\Pi \cap T \subset \Pi \cap V \subset V\).
Theorem \ref{thm:nonuniq} shows that such a non-congruent pair \((S,T)\) exists inside the vertex–plane incidence structure of the dodecahedron.

In the second step we again encode this discrete data into convex bodies by a local
modification at selected vertices. Starting from \(D\), we cut off a small congruent
triangular pyramid at each vertex \(v \in X\), obtaining a polytope \(K_{X}\) for
\(X \subseteq V\). When an affine plane \(H\) meets \(D\) near some vertices, the local
shape of the section \(K_{X} \cap H\) in a neighbourhood of those vertices is completely
determined, up to isometry, by the configuration \(\Pi \cap X \subset \Pi \cap V \subset V\) , where \(\Pi\) is the nearby
vertex–plane through the intersected vertices. The distribution of isometry classes of
planar sections of \(K_{X}\) can therefore be written as a convex combination of
contributions coming from the finitely many planar intersection types
\(\Pi \cap X \subset \Pi \cap V \subseteq V\), with coefficients prescribed precisely by the planar statistic
\(\mathrm{PS}(X)\).

From this viewpoint, Theorem \ref{thm:convex-bodies} is (upon replacing faces by vertices)the exact three-dimensional analogue of the Mallows–Clark theorem: the equality \(\mathrm{PS}(S) = \mathrm{PS}(T)\) ensures that the
section distributions of \(K_{S}\) and \(K_{T}\) coincide, while the underlying subsets
\(S\) and \(T\) and hence the patterns of truncated vertices of \(D\) are not related by
any isometry.

\section{Main results and model}
Let $V$ be the 20 vertices of a regular dodecahedron in the golden-ratio model. Let's write that out in full detail for convenience of the reader. Here, we set $\varphi = \frac{1 + \sqrt{5}}2.$
\begin{center}
\begingroup\small
\setlength{\tabcolsep}{6pt}
\begin{tabular}{r c  r c  r c  r c}
\toprule
idx & $(x,y,z)$ & idx & $(x,y,z)$ & idx & $(x,y,z)$ & idx & $(x,y,z)$ \\
\midrule
0 & $(-1,-1,-1)$ & 1 & $(-1,-1,1)$ & 2 & $(-1,1,-1)$ & 3 & $(-1,1,1)$ \\
4 & $(1,-1,-1)$ & 5 & $(1,-1,1)$ & 6 & $(1,1,-1)$ & 7 & $(1,1,1)$ \\
8 & $(0,\varphi^{-1},\varphi)$ & 9 & $(0,\varphi^{-1},-\varphi)$ & 10 & $(0,-\varphi^{-1},\varphi)$ & 11 & $(0,-\varphi^{-1},-\varphi)$ \\
12 & $(\varphi^{-1},\varphi,0)$ & 13 & $(-\varphi^{-1},\varphi,0)$ & 14 & $(\varphi^{-1},-\varphi,0)$ & 15 & $(-\varphi^{-1},-\varphi,0)$ \\
16 & $(\varphi,0,\varphi^{-1})$ & 17 & $(\varphi,0,-\varphi^{-1})$ & 18 & $(-\varphi,0,\varphi^{-1})$ & 19 & $(-\varphi,0,-\varphi^{-1})$ \\
\bottomrule
\end{tabular}
\endgroup

\end{center}
We depict those vertices in a Schlegel diagram as usual. The full set is depicted as follows:
\begin{figure}[h]\centering
\includegraphics[width=0.45\textwidth]{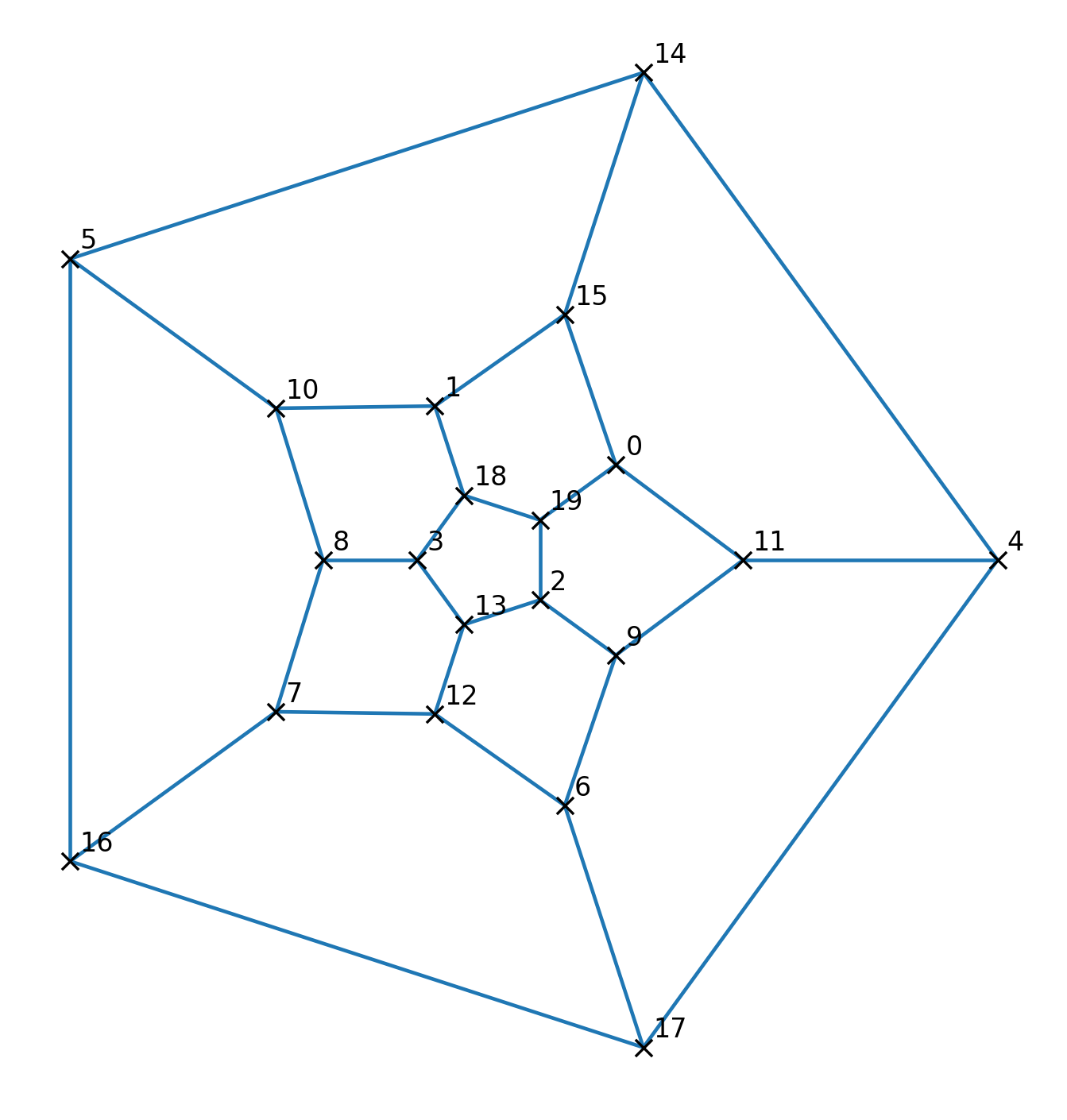}
\caption{Schlegel diagram with all 20 vertices labeled.}
\end{figure}

For $X\subset V$ the \emph{planar statistic} $\mathrm{PS}(X)$ is the multiset, over all planes $\Pi$ with $|\Pi\cap V|\ge 3$, of planar congruence classes of subset inclusions $\Pi\cap X \subseteq V$, seen as point sets in $\mathbb R^3$. More precisely,  let $D\subset\mathbb{R}^3$ be the dodecahedron as above with vertex set
$V$. A \emph{vertex-plane} is an affine plane
$\Pi\subset\mathbb{R}^3$ with $|\Pi\cap V|\ge 3$, and we denote by
\[
\mathcal{H} := \{\Pi\subset\mathbb{R}^3 : \Pi \text{ is a vertex-plane of } D\}
\]
the finite set of all vertex-planes of $D$.

For a subset $X\subset V$ and a vertex-plane $\Pi\in\mathcal{H}$ we consider the
inclusion of finite subsets $\Pi\cap X \subseteq V$.
Two vertex-planes $\Pi,\Pi'\in\mathcal{H}$ are called
\emph{planarly congruent with respect to $X$} if there exists a Euclidean
isometry $g\in\mathrm{Isom}(\mathbb{R}^3)$ such that $g(V) = V$, $g(\pi \cap V)=\Pi' \cap V$ and $g(\Pi\cap X) = \Pi'\cap X$.
We denote the corresponding equivalence relation on $\mathcal{H}$ by
$\Pi\sim_X \Pi'$ and write $[\Pi]_X$ for the equivalence class of
$\Pi$.
\begin{definition}\label{def:planar-statistics}
The \emph{planar statistic} of $X\subset V$ is the
multiset $\mathrm{PS}(X)$
of planar congruence classes $[\Pi]_X$ as $\Pi$ ranges over all vertex-planes
in $\mathcal{H}$, counted with multiplicity. 
\end{definition}
In other words,
$\mathrm{PS}(X)$ records, for each vertex-plane $\Pi$, the isometry type in
$\mathbb{R}^3$ of the inclusion $\Pi\cap X\subseteq \Pi \cap V \subseteq V$, and forgets the actual
position of $\Pi$.

\begin{remark} \label{rem:stat}
There is a slight variation of this definition which would record just the isometry type of the inclusion $\Pi \cap X \subseteq \Pi \cap V$ instead of $\Pi \cap X \subseteq V$. This is almost the same amount of information, except that in the case of a square $Q=\Pi \cap V$ with $\Pi \cap X$ two adjacent vertices. In this case, the two distinct embeddings of $Q$ into $V$ give rise to two different planar congruence
classes $[\Pi]_X$.
\end{remark}

It is a basic observation that the planar statistic of an image of $X$ under an isometry of $V$, orientation preserving or not, is the same as the one for $X$; i.e.\ isometric subsets of $V$ yield the same planar statistics. In view of the applications in shape recognition described in the introduction, our initial hope was that the planar statistics $\mathrm{PS}(X)$ allows one to recover $X$, but we show that this is not the case. Our main result is the following theorem:

\begin{theorem}\label{thm:nonuniq}
There exist two 7-element subsets $S,T\subset V$ that are not congruent in $\mathbb{R}^3$, yet $\mathrm{PS}(S)=\mathrm{PS}(T)$. One such pair is $S=\{0,1,2,3,4,11,17\}$ and $T=\{0,1,3,4,5,11,17\}$.
\end{theorem}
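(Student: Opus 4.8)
The plan is to verify the claimed equality $\mathrm{PS}(S)=\mathrm{PS}(T)$ by a finite, fully explicit computation, and separately to rule out congruence of $S$ and $T$ in $\mathbb{R}^3$. Since both objects are finite, the proof is in principle a matter of enumeration, so the real work is organizing that enumeration so that it can be checked by hand or by a short verifiable computation. First I would determine the finite set $\mathcal{H}$ of vertex-planes of $D$: for each plane $\Pi$ with $|\Pi\cap V|\ge 3$ I would record the point set $\Pi\cap V$. For the regular dodecahedron these fall into a small number of congruence types of cross-sections (triangles, the golden-ratio rectangles, regular pentagons, and the larger decagonal/hexagonal sections), and I would tabulate each $\Pi\in\mathcal H$ together with the list of vertex indices it contains. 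This table is the combinatorial backbone and only has to be computed once, independently of $S$ and $T$.

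Next, for the candidate pair $S=\{0,1,2,3,4,11,17\}$ and $T=\{0,1,3,4,5,11,17\}$ I would, for each vertex-plane $\Pi\in\mathcal H$, form the two finite configurations $\Pi\cap S\subseteq \Pi\cap V\subseteq V$ and $\Pi\cap T\subseteq \Pi\cap V\subseteq V$, and compute a congruence invariant of each inclusion. A convenient complete invariant for the inclusion $\Pi\cap X\subseteq\Pi\cap V\subseteq V$ is the labelled matrix of pairwise Euclidean distances among the points of $\Pi\cap V$, together with the indicator of which of those points lie in $X$; up to the action of $\mathrm{Isom}(\mathbb R^3)$ stabilizing $V$, this distance data determines $[\Pi]_X$ (with the single square-section caveat noted in Remark \ref{rem:stat}). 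Aggregating these invariants over all $\Pi\in\mathcal H$ produces the multisets $\mathrm{PS}(S)$ and $\mathrm{PS}(T)$, and the claim is the assertion that these two multisets coincide. I would present the outcome as a table: for each isometry type occurring, the multiplicity with which it appears in $\mathrm{PS}(S)$ and in $\mathrm{PS}(T)$, showing the two columns agree type by type.

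To show $S$ and $T$ are \emph{not} isometric in $\mathbb{R}^3$, I would compare a coarser but easily computed invariant: the multiset of pairwise distances $\{\,\lVert v-w\rVert : v,w\in X,\ v\neq w\,\}$. Any isometry carrying $S$ to $T$ must preserve this multiset, so exhibiting a discrepancy (for instance a distance value occurring with different multiplicity in $S$ and $T$, or a configuration such as a collinear or coplanar subtriple present in one but not the other) suffices. Because an arbitrary isometry of $\mathbb R^3$ need not preserve $V$, this distance-multiset test is the clean way to establish non-congruence without reference to the symmetry group of $D$.

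The main obstacle is the first step: correctly and exhaustively enumerating $\mathcal H$ and, in particular, not missing any vertex-plane or miscounting the congruence type of a section. The subtlety flagged in Remark \ref{rem:stat}—that a square section with two adjacent chosen vertices admits two inequivalent embeddings into $V$—means the invariant must genuinely record the inclusion into $V$, not merely into $\Pi\cap V$, so I would take care that my distance-based invariant includes enough ambient data (the distances from the points of $\Pi\cap V$ to the remaining vertices of $V$, or equivalently the full labelled distance matrix on $V$ restricted appropriately) to distinguish these cases. Once the enumeration of $\mathcal H$ is fixed and verified, the remaining comparisons are mechanical, and the equality $\mathrm{PS}(S)=\mathrm{PS}(T)$ together with the distance-multiset discrepancy completes the proof.
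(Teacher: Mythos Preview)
Your plan for verifying $\mathrm{PS}(S)=\mathrm{PS}(T)$ is essentially the paper's: a finite enumeration of vertex-planes, with section types recorded by a complete invariant and tallied into a table. The paper adds one shortcut you might adopt---since $T=(S\setminus\{2\})\cup\{5\}$, every vertex-plane avoiding both $2$ and $5$ contributes identically to both statistics, so only planes through $2$ (for $S$) versus planes through $5$ (for $T$) need to be compared.

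The genuine gap is in your non-congruence argument. Your proposed invariant, the multiset of pairwise distances in $X$, \emph{does not distinguish $S$ from $T$}: the two sets are homometric. Since $S$ and $T$ share the six vertices $\{0,1,3,4,11,17\}$, the only distances that could differ are those from $2$ (in $S$) and from $5$ (in $T$) to this common set. A direct computation from the coordinates gives, for the squared distances,
\[
\{\,\lVert 2-v\rVert^2 : v\in\{0,1,3,4,11,17\}\,\}=\{4,8,4,8,4,8\}=\{\,\lVert 5-v\rVert^2 : v\in\{0,1,3,4,11,17\}\,\},
\]
so the distance multisets of $S$ and $T$ coincide. (This is not an accident: $5=-2$, and the antipodal map is an isometry of $V$, so the distance profile of $2$ within $V$ matches that of $5$; one then checks the common six vertices happen to hit each distance class with the same multiplicities.) Your hedge toward ``a collinear or coplanar subtriple present in one but not the other'' does not help as stated, because any such discrepancy would itself have to be detected by some metric invariant you have not yet identified.

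The paper instead uses a finer, graph-theoretic invariant on the edge-skeleton of $D$: each of $S,T$ contains a unique edge-path of length three, namely $(0,11,4,17)$, and one then examines which endpoints of this path have further elements of $X$ within graph-distance $2$. In $S$ the endpoint $17$ has none (besides path vertices), whereas in $T$ both endpoints do; since edge-length and the graph-distance-$2$ Euclidean distance are metric data, this asymmetry is preserved by any isometry of $\mathbb{R}^3$ carrying $S$ to $T$, giving the contradiction. You need an argument of this kind---something that sees at least triples, not just pairs---to close the gap.
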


\begin{figure}[h]\centering
\includegraphics[width=0.38\textwidth]{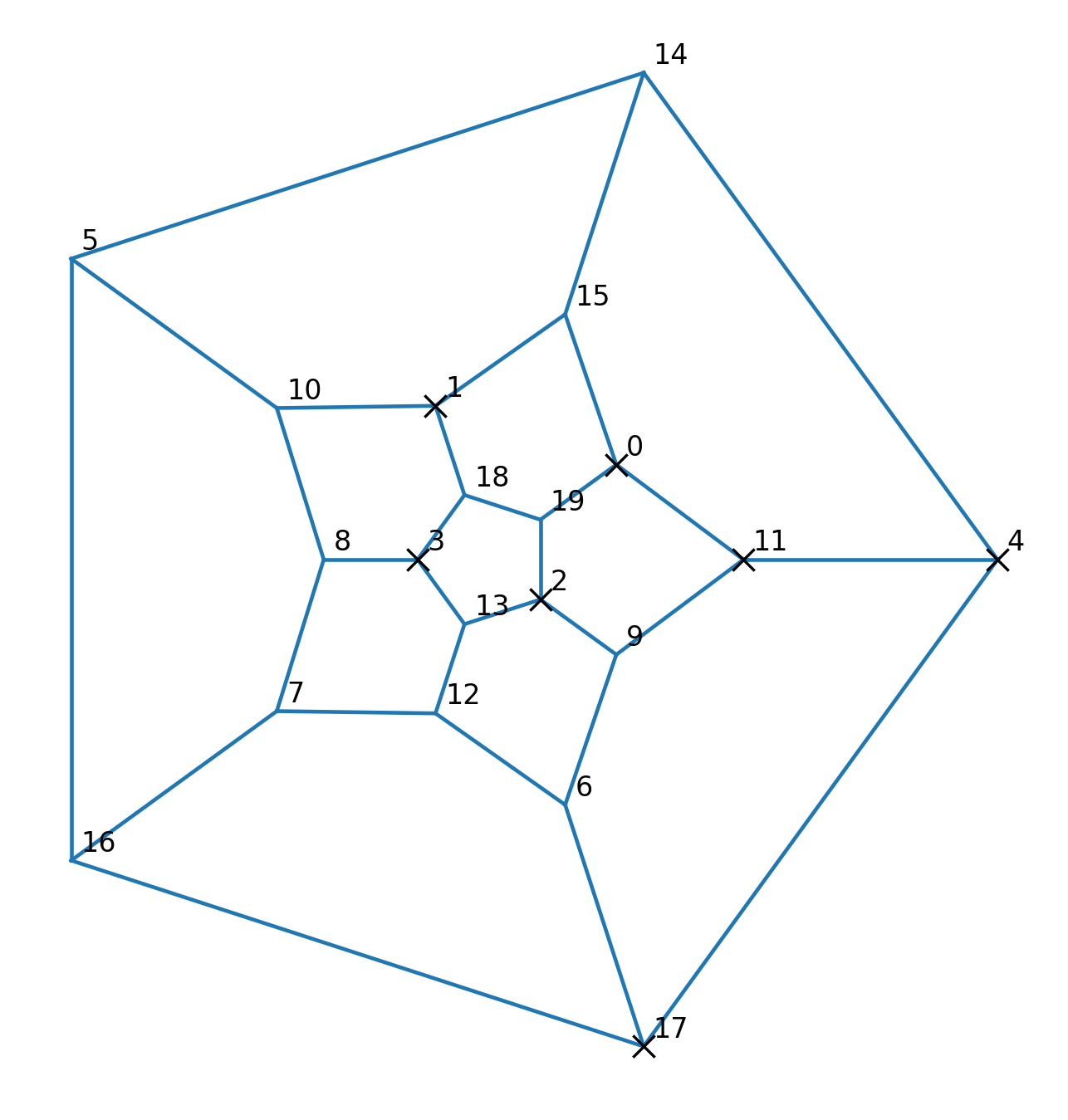}\hfill
\includegraphics[width=0.38\textwidth]{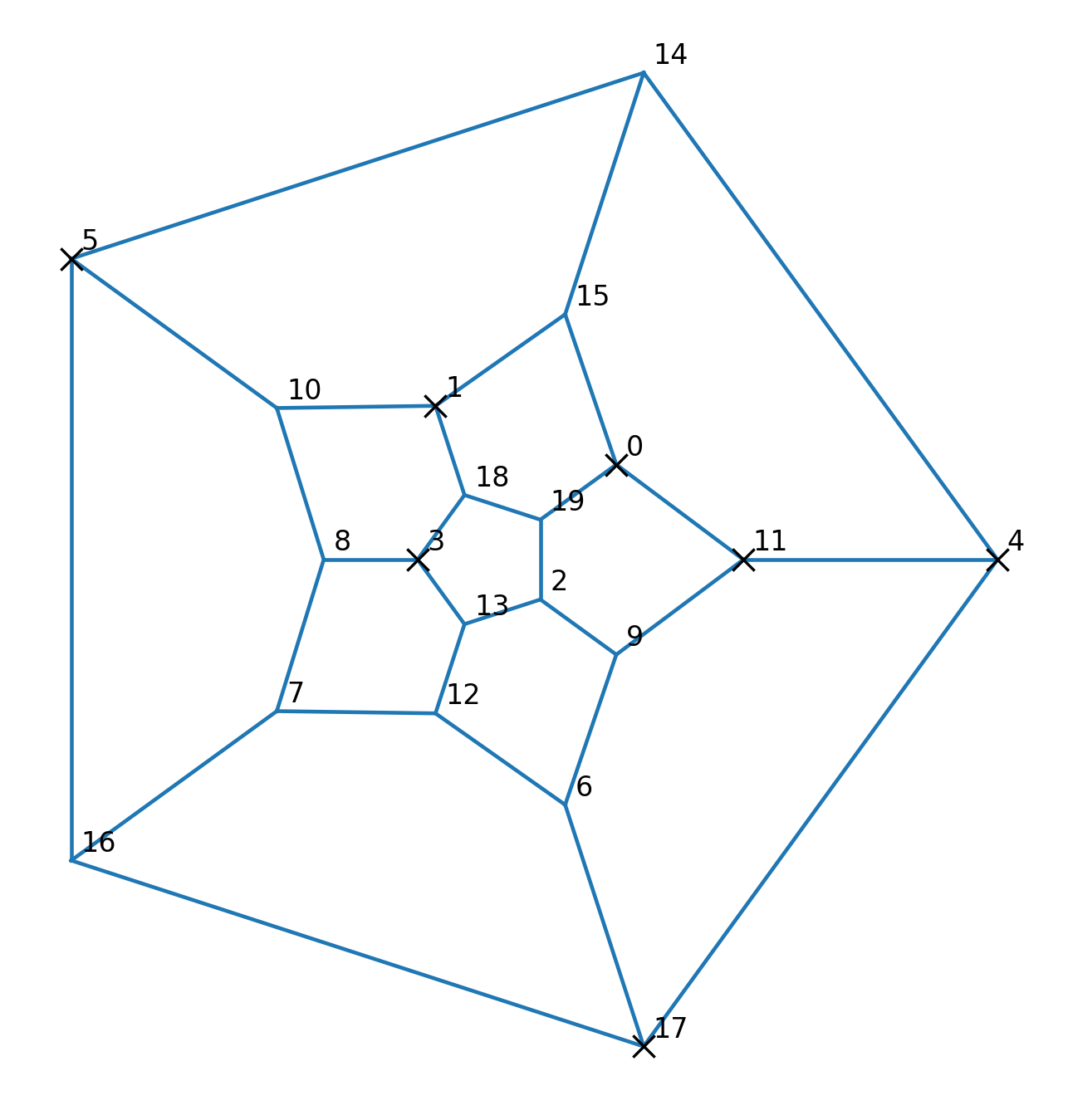}
\caption{7-element subsets $S$ (left) and $T$ (right)}
\end{figure}

In contrast, for all other Platonic solids, in particular for the icosahedron, we find injectivity of planar statistics up to congruence for all sizes of vertex sets -- a fact that can be verified computationally. The pair $(S,T)$ is the unique $7$-element pair of subsets up to isometry with the properties as stated in Theorem \ref{thm:nonuniq}, there are other pairs of size $r$ for $r \in \{8,\dots,13\}.$ 

\begin{proof}
The proof of Theorem \ref{thm:nonuniq} is in part computational: we enumerate all 319 vertex planes and aggregate subconfigurations by planar congruence. This shows that the planar statistics are identical, see below. Since $T=(S \setminus \{2\}) \cup \{5\}$, we only need to study the planes in $S$ that contain $2$ and compare to planes in $T$ that contain $5$; this can also be done by direct inspection with a three-dimensional model.

In order to see that $S$ and $T$ are not congruent, observe that each configuration contains a unique path (up to orientation) of length three; this is $(0,11,4,17)$ in both $S$ and  $T$. Now observe that the only endpoint of those paths, which does not have other neighbors at distance $\leq 2$ is the vertex $17$ in $S$. Hence, $S$ and $T$ cannot be congruent.
\end{proof}

Before we proceed, we need to say a bit more about the possible planar intersections with the vertex set of the dodecahedron. We group the 319 vertex planes by the isometry type of $\Pi\cap V$ and list their frequencies below. The picture shows convex hull of $\Pi \cap V$.

\begin{longtable}{@{}c c c c@{}}
\cfgcell{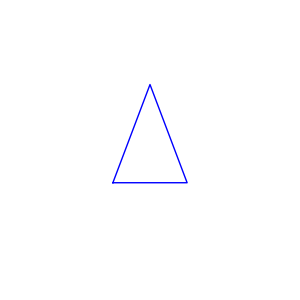}{3,0}{60} & \cfgcell{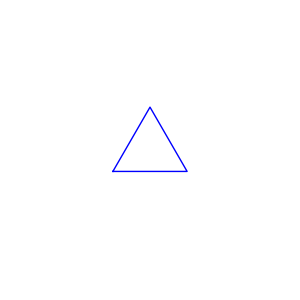}{3,0}{20} & \cfgcell{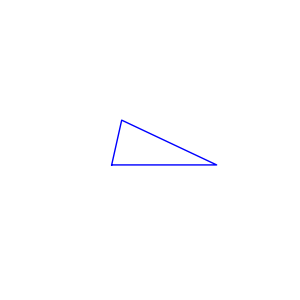}{3,0}{60} & \cfgcell{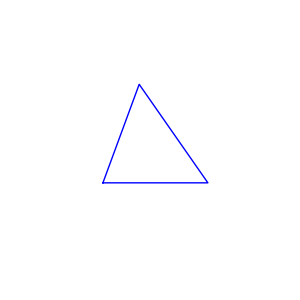}{3,0}{60} \\[-40pt]
\cfgcell{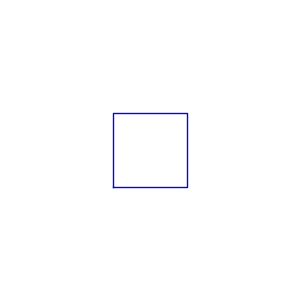}{4,0}{30} & \cfgcell{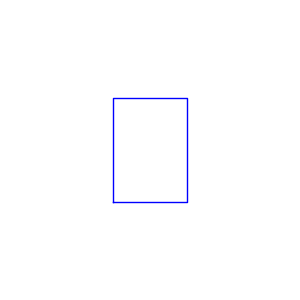}{4,0}{30} & \cfgcell{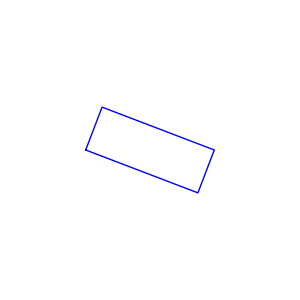}{4,0}{15} & \cfgcell{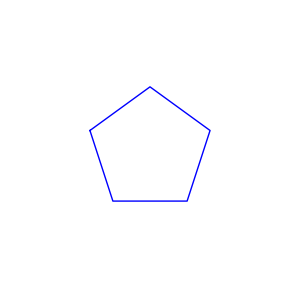}{5,0}{12} \\[-40pt]
\cfgcell{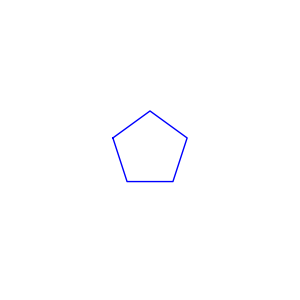}{5,0}{12} & \cfgcell{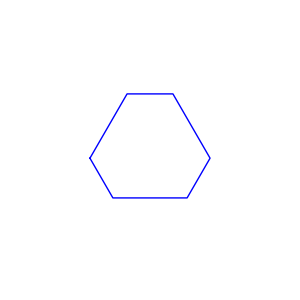}{6,0}{20} &  & \\
\caption{List of vertex planes and their frequencies}
\end{longtable}

This list fits with the count of 3-element subsets that we use to generate the planes, indeed we have:
$$(20 + 3\cdot 60)\binom{3}{3}+(15+2\cdot 30) \binom{4}{3}+2 \cdot 12 \binom{5}{3}+20\binom{6}{3}=1140 = \binom{20}3.$$
Subsequently we display the planar statistics for $S$ and $T$ aggregated by isometry type of the planar embedding $Z \subset V$. The main result follows then simply by comparison of the respective counts. Again, since $T=(S \setminus \{2\}) \cup \{5\}$, we only need to count planar configurations in $S$ containing $2$ and compare to planar configurations in $T$ containing $5$. In principle, this can be done by hand. The pictures in the following array show the convex hull of $\Pi \cap V$ with the subset marked. Below the picture one can read $(|\Pi \cap V|,|\Pi \cap S|)$ and the count of the configuration in $S$ and $T$.

\vspace{-0.8cm}

\begin{longtable}{@{}c c c c@{}}
\cfgcell{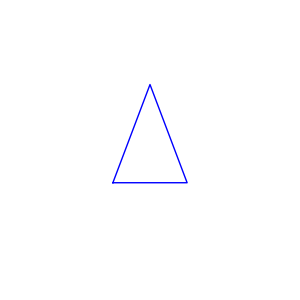}{3,0}{16} & \cfgcell{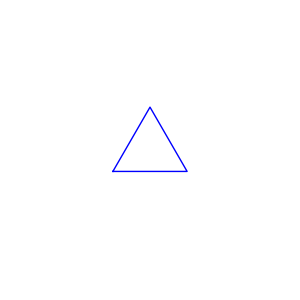}{3,0}{6} & \cfgcell{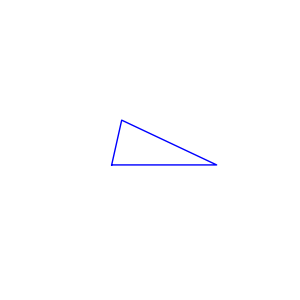}{3,0}{15} & \cfgcell{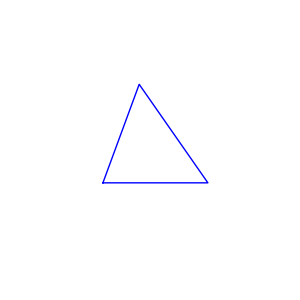}{3,0}{15} \\[-40pt]
\cfgcell{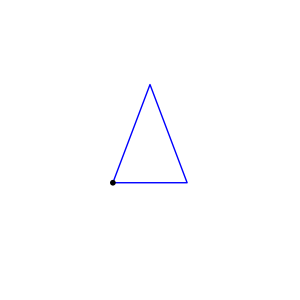}{3,1}{18} & \cfgcell{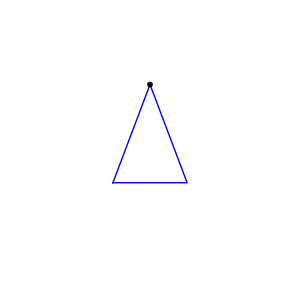}{3,1}{9} & \cfgcell{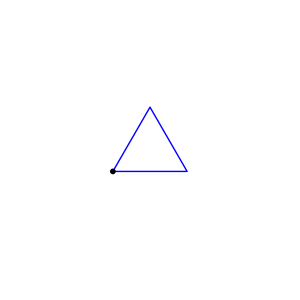}{3,1}{7} & \cfgcell{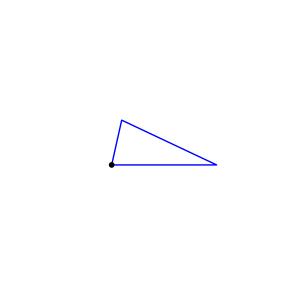}{3,1}{20} \\[-40pt]
\cfgcell{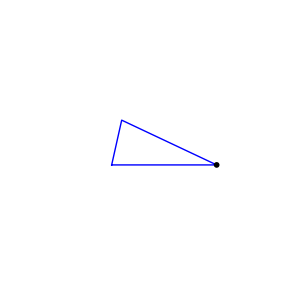}{3,1}{9} & \cfgcell{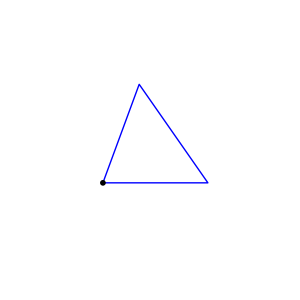}{3,1}{9} & \cfgcell{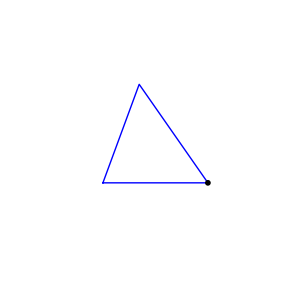}{3,1}{20} & \cfgcell{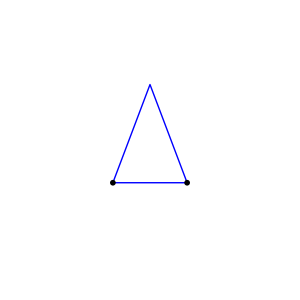}{3,2}{5} \\[-40pt]
\cfgcell{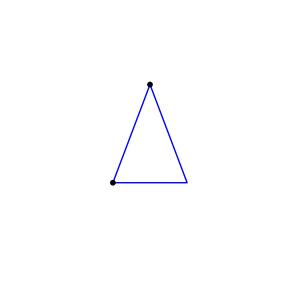}{3,2}{10} & \cfgcell{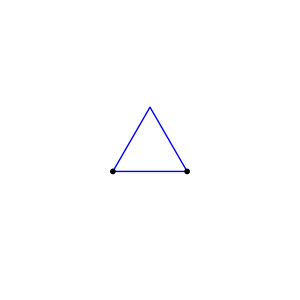}{3,2}{7} & \cfgcell{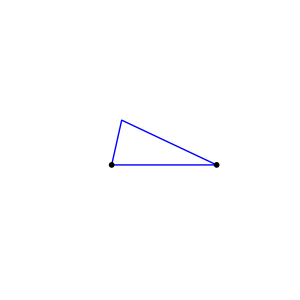}{3,2}{10} & \cfgcell{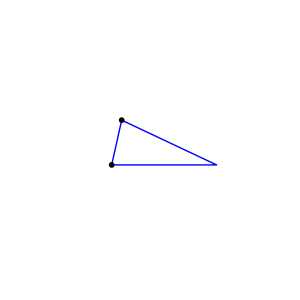}{3,2}{4} \\[-40pt]
\cfgcell{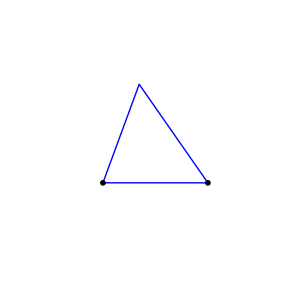}{3,2}{10} & \cfgcell{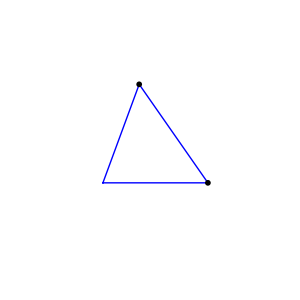}{3,2}{4} & \cfgcell{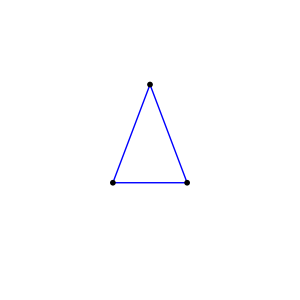}{3,3}{2} & \cfgcell{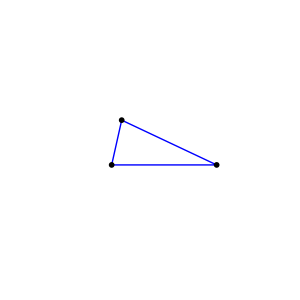}{3,3}{2} \\[-40pt]
\cfgcell{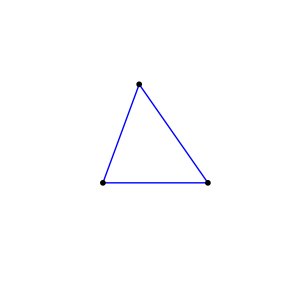}{3,3}{2} & \cfgcell{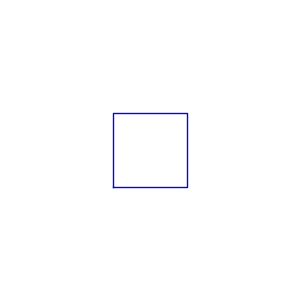}{4,0}{3} & \cfgcell{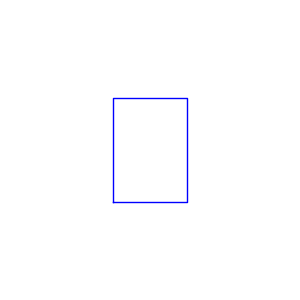}{4,0}{5} & \cfgcell{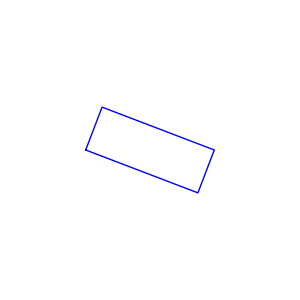}{4,0}{1} \\[-40pt]
\cfgcell{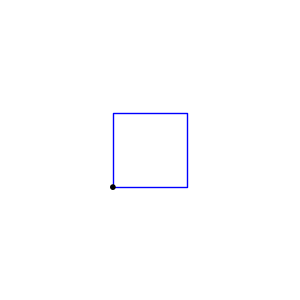}{4,1}{17} & \cfgcell{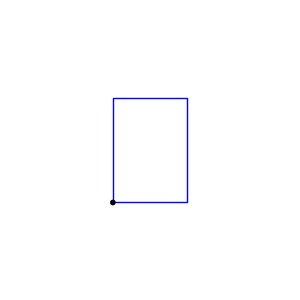}{4,1}{11} & \cfgcell{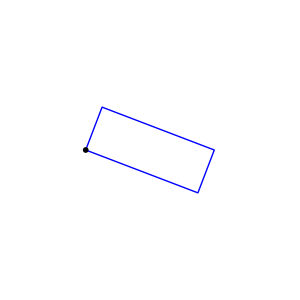}{4,1}{9} & \cfgcell{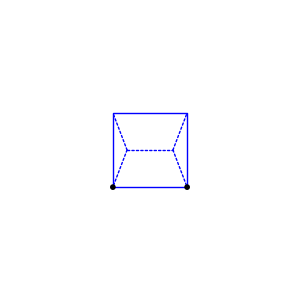}{4,2}{2} \\[-40pt]
\cfgcell{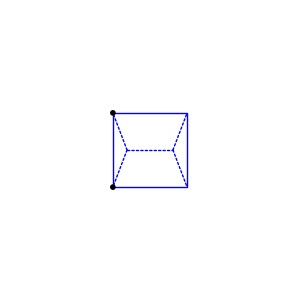}{4,2}{2} & \cfgcell{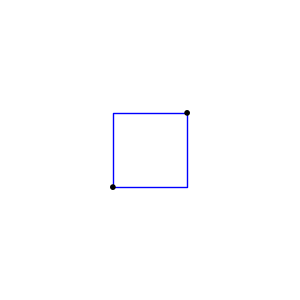}{4,2}{2} & \cfgcell{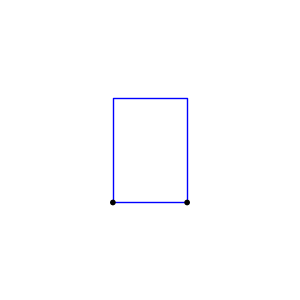}{4,2}{4} & \cfgcell{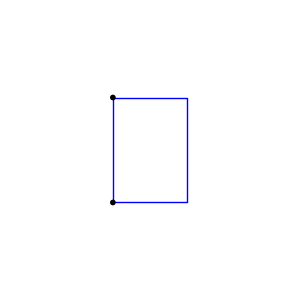}{4,2}{4} \\[-40pt]
\cfgcell{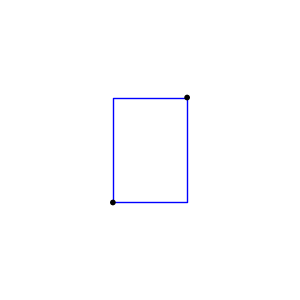}{4,2}{3} & \cfgcell{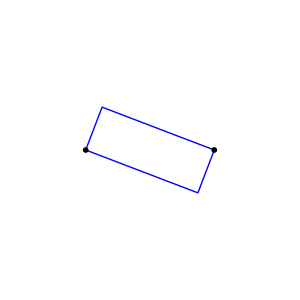}{4,2}{1} & \cfgcell{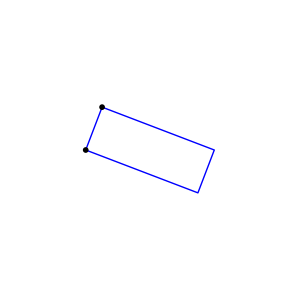}{4,2}{1} & \cfgcell{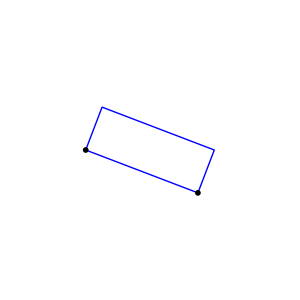}{4,2}{1} \\[-40pt]
\cfgcell{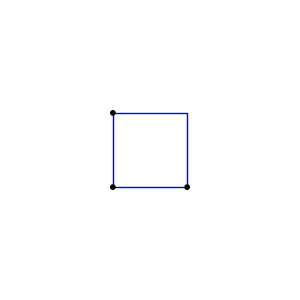}{4,3}{3} & \cfgcell{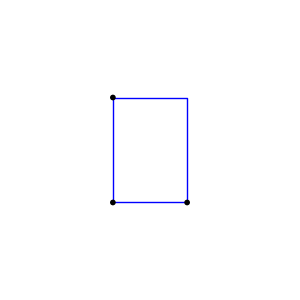}{4,3}{3} & \cfgcell{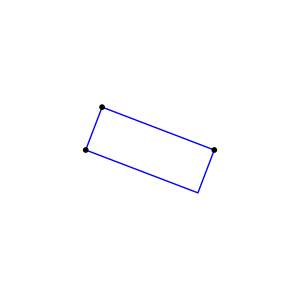}{4,3}{2} & \cfgcell{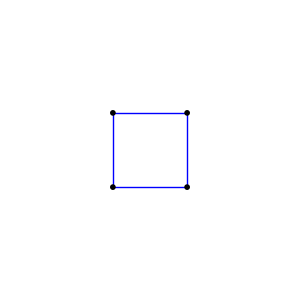}{4,4}{1} \\[-40pt]
\cfgcell{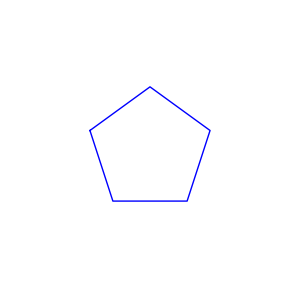}{5,0}{1} & \cfgcell{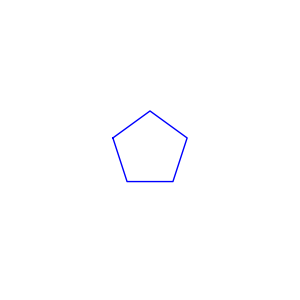}{5,0}{1} & \cfgcell{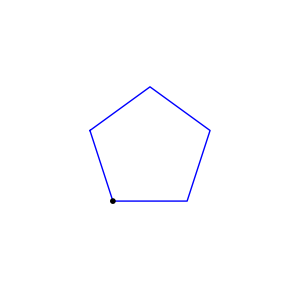}{5,1}{4} & \cfgcell{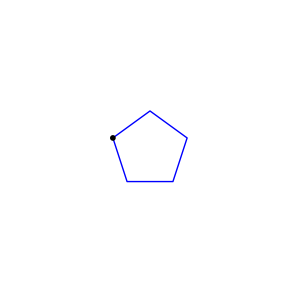}{5,1}{4} \\[-40pt]
\cfgcell{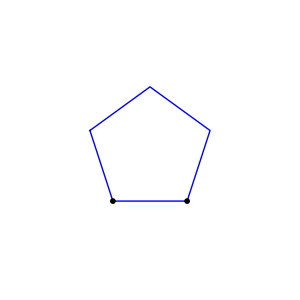}{5,2}{3} & \cfgcell{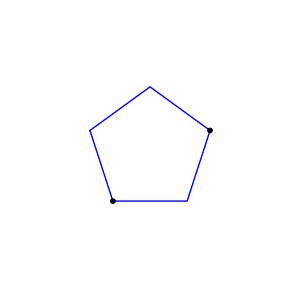}{5,2}{1} & \cfgcell{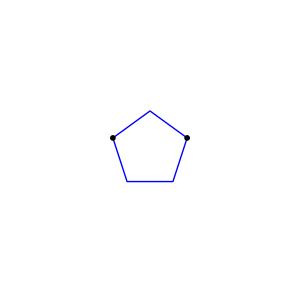}{5,2}{3} & \cfgcell{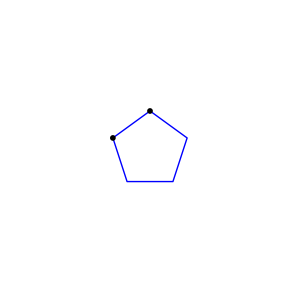}{5,2}{1} \\[-40pt]
\cfgcell{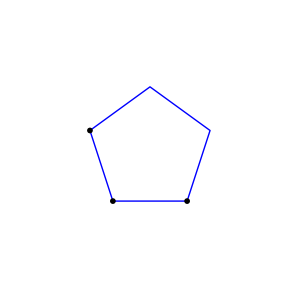}{5,3}{1} & \cfgcell{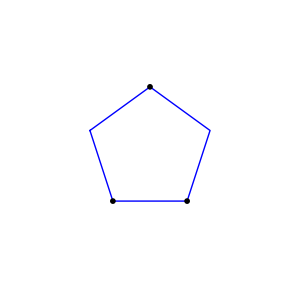}{5,3}{2} & \cfgcell{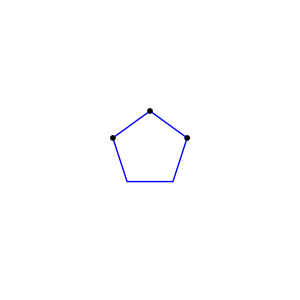}{5,3}{2} & \cfgcell{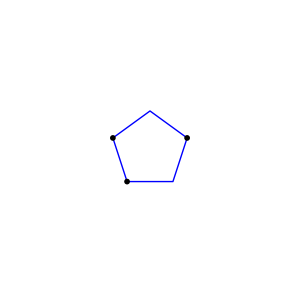}{5,3}{1} \\[-40pt]
\cfgcell{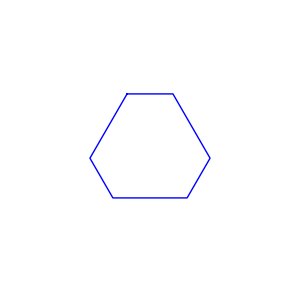}{6,0}{1} & \cfgcell{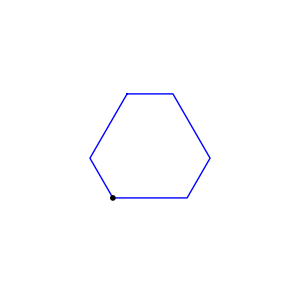}{6,1}{5} & \cfgcell{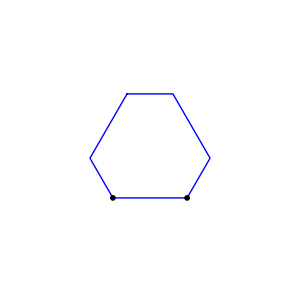}{6,2}{1} & \cfgcell{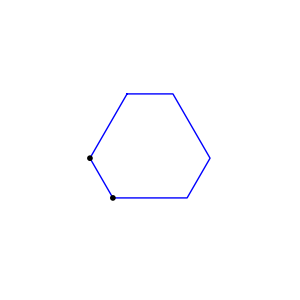}{6,2}{1} \\[-40pt]
\cfgcell{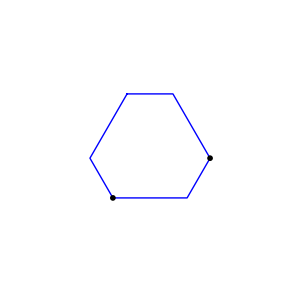}{6,2}{3} & \cfgcell{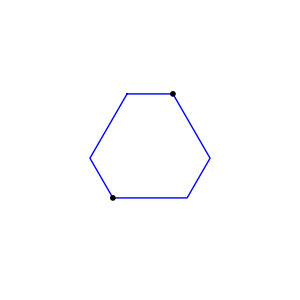}{6,2}{1} & \cfgcell{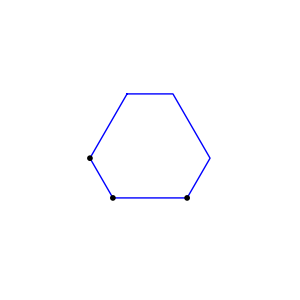}{6,3}{2} & \cfgcell{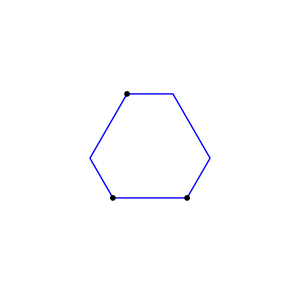}{6,3}{2} \\[-40pt]
\cfgcell{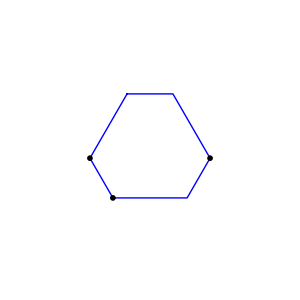}{6,3}{2} & \cfgcell{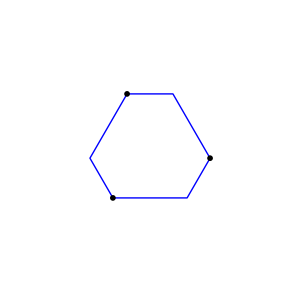}{6,3}{1} & \cfgcell{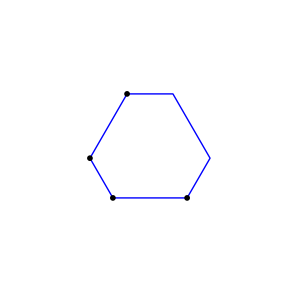}{6,4}{1} &  \\
\caption{List of subconfigurations with their frequency}
\end{longtable}

For the square with two adjacent vertices as a subset, there are actually two different isometric embeddings into $V$, see Remark \ref{rem:stat}; we indicated their difference by drawing the roof on the square as it appears in the dodecahedron.

\section{Convex bodies}

We will now refine the argument and show that $S,T$ can be used to construct three-dimensional polytopes $K_S$ and $K_T$ that are not isometric, but their distribution of isometry classes of planar intersections is identitical. Let us first explain more precisely, what that means:
We denote by $\mathcal{P}$ the space of compact planar convex sets modulo Euclidean isometries. For each closed convex set $K \subset \mathbb R^3$, we consider the natural map
\[
\Phi_K \colon \mathcal{A}(3,2)\longrightarrow\mathcal{P},\qquad
H\longmapsto [K\cap H]_{\mathrm{iso}},
\]
where $\mathcal{A}(3,2)$ is the affine Grassmannian of
planes in $\mathbb{R}^3$. Note that $\mathcal{A}(3,2)$ is naturally a fibre bundle over $\mathbb R{\rm P}(2)$ with fibre $\mathbb R$, where the fibre over a line $u\in\mathbb R {\rm P}(2)$ is the set of all planes with normal $u$, where the element in the fibre determines the offset. A natural measure $\lambda$, invariant under isometries of $\mathbb R^3$, can then be constructed as the product of the surface area measure on $\mathbb R{\rm P}(2)$ and the Lebesgue measure on the fibre $\mathbb R$. With this normalization of $\lambda$, we obtain $(\Phi_{K})_*(\lambda)(\mathcal P \setminus \{\varnothing \}) = 6 \cdot W_2(K)$ for a compact convex set $K$, where $W_2$ is the second quermassintegral. We call the Borel measure $(\Phi_K)_*(\lambda)$ on $\mathcal{P}$ the distribution of isometry classes of planar sections. 

\begin{theorem}\label{thm:convex-bodies}
There exist three-dimensional polytopes $K,L \subseteq \mathbb R^3$, such that
\begin{enumerate}
\item $K$ and $L$ are not congruent, but
\item $(\Phi_K)_*(\lambda) = (\Phi_L)_*(\lambda)$.
\end{enumerate}
\end{theorem}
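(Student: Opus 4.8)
The plan is to promote the discrete non-uniqueness result of Theorem~\ref{thm:nonuniq} to a statement about honest convex bodies by performing a small, uniform local modification of the dodecahedron $D$ at the vertices in $S$ and $T$, exactly as sketched in the introduction. Concretely, fix a small parameter $\varepsilon > 0$ and let $K = K_S$ be the polytope obtained from $D$ by slicing off a congruent triangular pyramid (corner) at each vertex $v \in S$, the cutting plane being perpendicular to the radial direction at $v$ and at distance $\varepsilon$ from $v$; define $L = K_T$ analogously using $T$. Since all vertices of $D$ are equivalent under the symmetry group and meet three faces symmetrically, the truncation at each vertex is a congruent ``cap,'' so the only data distinguishing $K$ from $L$ is \emph{which} vertices are truncated, i.e.\ the subsets $S$ and $T$ themselves. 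Part~(1) is then immediate: if $K$ and $L$ were congruent by some $g \in \mathrm{Isom}(\mathbb R^3)$, then $g$ would map truncated corners to truncated corners and hence restrict to an isometry of $V$ carrying $S$ to $T$, contradicting the non-congruence of $S$ and $T$ established in Theorem~\ref{thm:nonuniq}.

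For part~(2), the key structural observation is that when $\varepsilon$ is sufficiently small, the isometry class of a generic section $K \cap H$ decomposes into a ``global'' contribution, identical for $K$ and $L$, and a sum of ``local'' cap contributions. First I would partition the affine Grassmannian $\mathcal A(3,2)$ according to which vertices of $D$ lie within distance $O(\varepsilon)$ of the plane $H$; for $\varepsilon$ small and $H$ in general position, $H$ either avoids all caps entirely (in which case $K \cap H = L \cap H = D \cap H$) or passes through the neighborhood of a cluster of vertices that lie on a common vertex-plane $\Pi$. In the latter regime the local shape of $K \cap H$ near such a vertex cluster is determined, up to isometry and to the relevant order in $\varepsilon$, by the combinatorial datum of which vertices of $\Pi \cap V$ are truncated, that is precisely by $\Pi \cap S$ inside $\Pi \cap V \subseteq V$. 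The total measure $(\Phi_K)_*(\lambda)$ can thus be written as a convex combination (weighted by $\lambda$-measures of the relevant thin slabs) of isometry types indexed by the planar intersection types $\Pi \cap S \subseteq \Pi \cap V \subseteq V$, with coefficients depending only on the local cap geometry and the vertex-plane $\Pi$, not on the identity of $S$ versus $T$.

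With this decomposition in hand, the equality $(\Phi_K)_*(\lambda) = (\Phi_L)_*(\lambda)$ follows from the hypothesis $\mathrm{PS}(S) = \mathrm{PS}(T)$: the global part is literally the same measure for $K$ and $L$, while the local parts are assembled from exactly the multiset of planar intersection types recorded by the planar statistic. Since $\mathrm{PS}(S)$ and $\mathrm{PS}(T)$ agree as multisets over all vertex-planes $\Pi \in \mathcal H$, and the $\lambda$-weight attached to each type depends only on $\Pi$ and the shared cap, summing the local contributions produces identical measures on $\mathcal P$. I would organize this as: (a) establish the slab decomposition of $\mathcal A(3,2)$; (b) prove a local lemma that $[K \cap H]_{\mathrm{iso}}$ near a vertex cluster depends only on the truncation pattern $\Pi \cap S$; and (c) integrate and invoke $\mathrm{PS}(S) = \mathrm{PS}(T)$.

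The main obstacle I anticipate is controlling the \emph{interaction and overlap} regions rigorously: ensuring that for small enough $\varepsilon$ a generic plane $H$ interacts with at most one well-separated vertex cluster at a time (so that cap contributions genuinely factor and do not interfere), and handling the measure-zero degenerate families of planes tangent to several caps or parallel to vertex-planes. One must verify that these degenerate loci carry $\lambda$-measure zero and that the remaining non-generic ``edge'' contributions can be matched between $K$ and $L$ using the fact that the planar statistic records inclusions $\Pi \cap S \subseteq \Pi \cap V \subseteq V$ (and not merely $\Pi \cap S \subseteq \Pi \cap V$), which is exactly the subtlety flagged in Remark~\ref{rem:stat}. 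A clean way to sidestep the most delicate estimates is to choose $\varepsilon$ small relative to the minimal gap between distinct vertex-planes of $D$, so that the slab neighborhoods of distinct planes are disjoint; the remaining work is then a bookkeeping argument transferring the multiset equality $\mathrm{PS}(S) = \mathrm{PS}(T)$ through the linear assignment of $\lambda$-weights.
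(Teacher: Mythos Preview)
Your overall strategy matches the paper's: truncate $D$ at the vertices of $S$ and $T$, argue non-congruence exactly as you do, and decompose the plane measure according to which caps a section meets. Part~(1) is fine.

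For part~(2), however, your dichotomy ``$H$ either avoids all caps or passes through the neighbourhood of a cluster of vertices lying on a common vertex-plane $\Pi$'' is false on a set of positive $\lambda$-measure, and this is not an edge case. The sets $A_1 = \{H : H \text{ meets exactly one cap}\}$ and $A_2 = \{H : H \text{ meets exactly two caps}\}$ are open and carry positive measure, yet a generic $H\in A_1$ or $H\in A_2$ is \emph{not} close to any vertex-plane (recall $|\Pi\cap V|\ge 3$): a plane through a single vertex, or rotating about the line through two vertices, can stay far from every third vertex. Consequently the contribution of these planes is not captured by $\mathrm{PS}(X)$ in the way you describe, and your slab argument (``choose $\varepsilon$ small so slab neighbourhoods of distinct vertex-planes are disjoint'') cannot cover them either---slabs around vertex-planes sharing a vertex are never disjoint, and their union still misses most of $A_1\cup A_2$.

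The paper handles these two cases by separate, genuinely different arguments. For $m=1$ one needs vertex-transitivity of $D$ together with the numerical coincidence $|S|=|T|$, which does not follow from $\mathrm{PS}(S)=\mathrm{PS}(T)$ alone in your write-up. For $m=2$ one observes that every unordered pair $\{v,w\}\subset V$ lies on at least one vertex-plane with three or more vertices, so the multiset of orbit types of pairs $\{v,w\}$ with their membership pattern in $X$ is recoverable from $\mathrm{PS}(X)$; only then does $\mathrm{PS}(S)=\mathrm{PS}(T)$ force $\mu_{S,2}=\mu_{T,2}$. You should insert these two steps explicitly; the rest of your outline (the $m=0$ global part and the $m\ge 3$ vertex-plane part) is essentially the paper's argument.
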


\begin{proof}
Let $D\subset\mathbb{R}^3$ be a regular dodecahedron with vertex set $V$, and
let $S,T\subset V$ be two subsets as in Theorem~\ref{thm:nonuniq}: they
are not congruent in $\mathbb{R}^3$, but their planar statistics agree, i.e.\ 
$\mathrm{PS}(S) = \mathrm{PS}(T)$
as multisets of planar congruence classes in the sense of
Definition~\ref{def:planar-statistics}.

Let us now explain how we construct polytopes out of $S$ and $T$. For each $X\subset V$ define a convex polytope $K_X$ by truncating all vertices
$v\in X$ by small congruent triangular caps. We will show that $K_S$ and $K_T$ are not
congruent in $\mathbb{R}^3$, but the distributions of isometry classes of their
planar sections with respect to the invariant measure on affine planes coincide.

Fix once and for all a small parameter $\varepsilon>0$.
For each vertex $v\in V$ let $P_v$ be the unique plane that intersects the three
edges incident to $v$ in points of distance $\varepsilon$ from $v$, and denote
by $H_v^-$ the closed half-space bounded by $P_v$ that contains the barycenter
of $D$. For any subset $X\subset V$ we set
\[
K_X := D\cap\bigcap_{v\in X} H_v^-.
\]
By construction $K_X$ is obtained from $D$ by cutting off, at each $v\in X$, a
small congruent triangular pyramid. In particular, if $g\in \mathrm{Isom}(\mathbb{R}^3)$ is an
isometry with $g(K_S)=K_T$, then $g$ must map the set of truncated vertices of
$K_S$ onto that of $K_T$, and restricts to an isometry $g\colon D\to D$ with
$g(S)=T$. Since $S$ and $T$ are not congruent as subsets of $V$ by
Theorem~\ref{thm:nonuniq}, it follows that $K_S$ and $K_T$ are not
congruent.

We now compare the distributions of planar sections. For
$H\in\mathcal{A}(3,2)$ set
\[
m(H) := \bigl|\{v\in V : H\cap B(v,\varepsilon)\neq\emptyset\}\bigr|,
\]
the number of vertex neighbourhoods met by $H$. For each integer $m\ge 0$ define
\[
A_m := \{H\in\mathcal{A}(3,2) : m(H)=m\},
\]
so that $\mathcal{A}(3,2) = \sqcup_{m\ge 0} A_m.$
Let $p_m := \lambda(A_m)$ and $\lambda_m := p_m^{-1}\,\lambda\!\restriction_{A_m}$
for all $m$ with $p_m>0$. Then, we have
\[
\lambda = \sum_{m\ge 0} p_m\,\lambda_m,
\]
a canonical convex decomposition of $\lambda$, and for each $X\subset V$ we
obtain a corresponding convex decomposition
\[
\mu_X = \sum_{m\ge 0} p_m\,\mu_{X,m},
\qquad
\mu_{X,m} := (\Phi_{K_X})_*(\lambda_m).
\]
To prove $\mu_S=\mu_T$ it therefore suffices to show that $\mu_{S,m} = \mu_{T,m}$ for all $m\ge 0$.
\medskip

\emph{The cases $m=0$ and $1$.}
If $H\in A_0$, then $H$ misses all balls $B(v,\varepsilon)$ and in particular all caps.
The section $K_X\cap H$ then coincides with $D\cap H$; it does not depend on $X$. In particular, we can conclude $\mu_{S,0} = \mu_{T,0}.$
Similarly, if $H\in A_1$, then $H$ meets exactly one ball $B(v,\varepsilon)$, say at $v$. The local
shape of the section $K_X\cap H$ in a neighbourhood of $B(v,\varepsilon)$ is completely
determined by the local geometry at $v$ and thus only depends on whether $v\in X$ or not. Thus, since $D$
is vertex-transitive and $|S|=|T|$, we obtain $\mu_{S,1} = \mu_{T,1}$.

\emph{The case $m\ge 3$.}
Suppose $H\in A_m$ with $m\ge 3$. By choice of $\varepsilon>0$, this implies that
$H$ lies in a small neighbourhood of a unique vertex plane
$\Pi\subset\mathbb{R}^3$ with
\[
\Pi\cap V = \{v_1,\dots,v_m\},\qquad m\ge 3,
\]
and that the vertices $\{v_1,\dots,v_m\}$ are precisely those whose balls
$B(v_i,\varepsilon)$ meet $H$. The local configuration of the section $K_X\cap H$ is then,
up to isometry, completely determined by the isometry type of the inclusion $\Pi\cap X \subset V$
and the position of $H$ in a sufficiently small neighbourhood of $\Pi$.

The isometry group $D$ acts on the set of such local configurations. For
a fixed $X \subset V$ the orbits of this action are parametrized precisely by
the planar congruence classes $[\Pi]_X$ appearing in the planar statistic
$\mathrm{PS}(X)$: two vertex planes $\Pi,\Pi'$ give rise to the same local
configuration type if and only if there is an isometry $g$ such that $g(V)=V$ and
$g(\Pi\cap X)=\Pi'\cap X$.

Fix a $G$-orbit of local configurations $\mathcal{O}$, i.e.\ an orbit of a planar inclusion $Z \subset V$ with $|Z|\ge 3$. Consider the set of planes $H$ whose local configuration belongs to $\mathcal{O}$. By
invariance of $\lambda$ and congruence of all caps and faces, the restriction of
$\lambda$ to this set has the same total mass and the same normalized image
measure under $\Phi_X$, independent of $X$; it depends only on $\mathcal{O}$.
Thus for each such orbit $\mathcal{O}$ there is a probability measure
$\nu_{\mathcal{O}}$ on $\mathcal{P}$ describing the contribution of one
occurrence of $\mathcal{O}$, and the component $\mu_{X,m}$ can be written as a
finite convex combination
\[
\mu_{X,m} = \sum_{\mathcal{O}} c_{\mathcal{O}}(X)\,\nu_{\mathcal{O}},
\]
where the sum runs over all orbits $\mathcal{O}$ with $m\ge 3$, and
$c_{\mathcal{O}}(X)$ is a nonnegative coefficient proportional to the number of
vertex planes whose local configuration with respect to $X$ lies in $\mathcal{O}$.
By definition, the multiset of classes $[\Pi]_X$ in $\mathrm{PS}(X)$ records
exactly these multiplicities. Thus the coefficients $c_{\mathcal{O}}(X)$ are
completely determined by $\mathrm{PS}(X)$.

Since $\mathrm{PS}(S) = \mathrm{PS}(T)$ as multisets of planar congruence
classes, it follows that $c_{\mathcal{O}}(S) = c_{\mathcal{O}}(T)$ for every
orbit $\mathcal{O}$, and hence $\mu_{S,m} = \mu_{T,m}$ for all $m \geq 3$.

\emph{The cases $m=2$.} If $H\in A_2$, then $H$ meets exactly two balls $B(v,\varepsilon)$, say at vertices
$v,w\in V$. The local shape of $K_X\cap H$ is then, up to isometry, determined by
the orbit type of the unordered pair $\{v,w\}$ under the isometry group of
$D$ together with the membership of $v,w$ in $X$. Again, there are only finitely many
such orbit types of unordered pairs in $V$, and for each such type all
corresponding pairs have the same local geometry. Each such pair belongs to various vertex planes with three or more vertices, so its contribution is again
encoded in the planar statistic $\mathrm{PS}(X)$; in particular, the counts of
pairs of each orbit type contained in $X$ are determined by $\mathrm{PS}(X)$.
Since $\mathrm{PS}(S)=\mathrm{PS}(T)$, the contributions for $m=2$ agree: $\mu_{S,2} = \mu_{T,2}$.

\medskip

Combining all cases, we have shown that $\mu_{S,m} = \mu_{T,m}$ for every $m\ge 0$. Since
\[
\mu_S = \sum_{m\ge 0} p_m\,\mu_{S,m}
\quad\text{and}\quad
\mu_T = \sum_{m\ge 0} p_m\,\mu_{T,m}
\]
with the same weights $p_m$, we conclude that $\mu_S=\mu_T$. This shows that the
distributions of isometry classes of planar sections of $K_S$ and $K_T$ with
respect to $\lambda$ coincide, as claimed.
\end{proof}


\section*{Acknowledgments}

We thank Vadim Alekseev and Leon Renkin for interesting discussions on this topic.

\vspace{0.2cm}

We used AI-generated scripted checks to classify vertex-plane types and to tally planar statistics,
and independently cross-checked key computations.
We also acknowledge assistance of a large language model for drafting and
refactoring code used to generate figures and tables; all mathematical claims and
computational results were verified independently by us.

\end{document}